\documentclass[11pt,a4paper]{amsart}

\usepackage{amsthm}
\usepackage{mathrsfs}
\usepackage{amsfonts,amssymb,amsmath}

\addtolength{\textwidth}{2cm}
\addtolength{\hoffset}{-1cm}
\addtolength{\textheight}{2cm}


\usepackage[dvips]{graphicx}
\usepackage{hyperref}
\usepackage{thmtools}

\usepackage{enumerate}

\declaretheoremstyle[
bodyfont=\normalfont,
]{remstyle}

\newtheorem{lemma}{\bf Lemma}

\newtheorem{theorem}{\bf Theorem}

\newtheorem{corr}{\bf Corollary}

\newtheorem{remark}{\bf Remark}

\newcommand\be{\begin{eqnarray*}}
\newcommand\ee{\end{eqnarray*}}
\newcommand\beq{\begin{equation}}
\newcommand\eeq{\end{equation}}
\newcommand\eps{\epsilon}

\newcommand\ben{\begin{eqnarray}}
\newcommand\een{\end{eqnarray}}


\begin{document}

\date{\today}

\title[On additive bases of sets with small product set]
{On additive bases of sets with small product set}

\author{Ilya D. Shkredov}
\address[1]{Steklov Mathematical Institute
ul. Gubkina, 8, Moscow, Russia, 119991
\and
IITP RAS
Bolshoy Karetny per. 19, Moscow, Russia, 127994}
\email{ilya.shkredov@gmail.com}

\author{Dmitrii Zhelezov}
\address[2]{Mathematical Sciences,
  Chalmers University of Technology,
  SE-412 96 Gothenburg, Sweden
  \and
  Mathematical Sciences,
  University of Gothenburg,
  SE-412 96 Gothenburg, Sweden}
\email{zhelezov@chalmers.se}



\subjclass[2000]{11B13 (primary), 05B10} \keywords{ additive basis, sum-product, additive irreducibility}

\date{\today}

\maketitle

\begin{abstract}
	We prove that finite sets of real numbers satisfying $|AA| \leq |A|^{1+\epsilon}$ with sufficiently small $\epsilon > 0$ cannot have small additive bases nor can they be written as a set of sums $B+C$ with $|B|, |C| \geq 2$. The result can be seen as a real analog of the conjecture of S\'ark\"ozy that multiplicative subgroups of finite fields of prime order are additively irreducible.
\end{abstract}

\maketitle

\section{Introduction}
\label{sec:introduction}

    The duality between the additive and multiplicative structure of an arithmetic set, now called `sum-product phenomena' has been extensively studied since the seminal paper of Erd\H{o}s and Szemer\'edi \cite{ErdosSzemeredi}. In its classical formulation the duality is expressed in the fact that for a finite set of integers or reals $A$ either the set of pairwise sums $A+A$ or pairwise products $AA$ is significantly larger then the original set, unless $A$ is close to a subring of the ambient ring, see \cite{TaoVu} for details.

   In the current paper we consider an intrinsic version of the sum-product phenomenon in the following setting. Assume that a set $A \subset \mathbb{R}$ has small multiplicative doubling, so that $|AA| \leq |A|^{1+\epsilon}$ for some sufficiently small $\epsilon > 0$ (we assume that $\epsilon$ is fixed and $|A| > C(\epsilon)$ is large).

   Recall that a set $B$ is called a \emph{basis}\footnote{All bases in the present paper are assumed to be of order two.} for $A$ if each element in $A$ can be represented as a sum of two elements in $B$ or simply $A \subseteq B+B$.

   First, we show that if $A$ is multiplicatively structured in the above sense, then it does not admit small additive bases of order $|A|^{1/2+c}$ for an explicit constant $c > 0$.
\begin{theorem} \label{thm:smalldoubling}
	There is an $\epsilon > 0$ such that the following holds: if $B$ is an arbitrary additive basis for a real set $A$ with $|AA| \leq |A|^{1+\epsilon}$ then $$
		|B| \gg  |A|^{1/2 + 1/442 - o(1)}.
	$$
\end{theorem}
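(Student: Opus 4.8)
The plan is to play the (approximate) multiplicative structure of $A$ against the additive structure that the relation $A\subseteq B+B$ forces on $A$, using a sum--product estimate as the bridge. Write $N=|A|$ and $K=|B|$. Since $A\subseteq B+B$ we have $N\le|B+B|\le K^{2}$, which is the trivial bound $K\gg N^{1/2}$; the task is to upgrade the exponent by the explicit constant $1/442-o(1)$. The guiding heuristic is that if $K$ were close to $N^{1/2}$, the covering $A\subseteq B+B$ would be extremely efficient: $|B+B|\le K^{2}\le N^{1+2c}$ with $c=1/442$, so $A$ would be a subset of density $\ge N^{-2c}$ of the sumset $S:=B+B$, and a typical $a\in A$ would have only $N^{O(c)}$ representations as $b_{1}+b_{2}$. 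A set that is simultaneously a dense subset of a sumset and has a tiny product set is heavily over-determined, and the proof makes this quantitative.

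First I would record the multiplicative input. From $|AA|\le N^{1+\epsilon}$ one gets $E^{\times}(A)\ge N^{4}/|AA|\ge N^{3-\epsilon}$, and dyadic pigeonholing then yields $\gg N^{1-\epsilon}$ ratios $\mu$ with $|A\cap\mu A|\gg N^{1-\epsilon}$, i.e.\ $A$ is almost invariant under many dilations. In parallel I would invoke the known sum--product estimates for sets with small product set (Solymosi-type inequalities and their refinements, e.g.\ by Konyagin--Shkredov), which give strong control over $|A+A|$, over the additive energies $E^{+}(A)$ and $E_{3}^{+}(A)$, or over the structure of $A$ as an efficient union of generalised geometric progressions. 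The additive input is $A\subseteq B+B$ with $|B+B|\le N^{1+2c}$; via Pl\"unnecke--Ruzsa and dyadic pigeonholing (and the harmless reduction that discards the elements of $B$ never used in a representation, so that $B\subseteq A-B$) one extracts from $A$ a large subset on which the structure inherited from $S=B+B$ is explicit.

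The heart of the matter is to show that these two descriptions of a large piece of $A$ are incompatible once $c=1/442$: intersecting a dilation relation $|A\cap\mu A|\gg N^{1-\epsilon}$ with the covering of $A$ by the translates $b+B$ produces a large subset of $A$ lying simultaneously in $B+B$ and in $\mu B+\mu B$, and feeding this into a sum--product estimate applied essentially to $B$ (whose additive energy is controlled because the covering is efficient) bounds the number of admissible configurations; the contradiction is reached once this bound beats the demand $E^{\times}(A)\ge N^{3-\epsilon}$. The value $1/2+1/442$ is then read off by optimising the numerology across the dyadic losses, the Pl\"unnecke--Ruzsa steps, and the exponent of the sum--product input.

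I expect the main obstacle to be exactly the efficiency of this final step. The naive route---pass to $A+A\subseteq B+B+B+B$, so $|A+A|\le K^{4}$, then quote a lower bound for $|A+A|$---only reproves $K\gg N^{1/2}$, since a sumset of $A$ already costs four copies of $B$; one must instead argue at the level of energies and incidences. A second trap is that a large subset $A'\subseteq A$ need not retain a usefully small product set: $|A'A'|\le N^{1+\epsilon}$ gives small multiplicative doubling of $A'$ only when $|A'|\ge N^{1-o(1)}$, whereas the efficient-covering argument most naturally produces subsets of size about $N^{1/2}$. Keeping the extracted subset both large and still multiplicatively structured, while genuinely exploiting the near-optimality of $|B+B|\le K^{2}$, is the technical crux, and it is what confines the gain to the small explicit constant $1/442$.
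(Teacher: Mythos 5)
Your write-up is a research plan rather than a proof: the decisive step --- deriving a quantitative contradiction between the multiplicative structure of $A$ and the covering $A\subseteq B+B$ --- is exactly the step you leave unspecified, and you acknowledge as much. Two concrete problems. First, the quantities you propose to double-count ($E^{\times}(A)\ge N^{3-\epsilon}$ together with many popular dilation ratios $\mu$) do not interact usefully with the covering as described: knowing that $A\cap\mu A$ is large and that both sets sit inside $B+B$ and $\mu B+\mu B$ gives no bound on ``admissible configurations'' without a specific incidence or energy input, and you never say which one or how it enters. Moreover, $|B+B|\le K^{2}$ holds for \emph{every} set $B$, so ``the additive energy of $B$ is controlled because the covering is efficient'' is not a usable hypothesis; the efficiency of the covering has to be encoded differently (the paper does it through the edge density of the containment graph $\{(b_1,b_2):b_1+b_2\in A\}$ and Gowers' common-neighbourhood lemma).

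Second, the exponent $1/2+1/442$ cannot be ``read off by optimising the numerology'' of an unspecified argument: it is $1/2+c/17$ with $c<1/26$, and both numbers come from specific inputs absent from your sketch. The actual mechanism is to lower- and upper-bound the additive energy $E_{+}(AA/A)$, not $E^{\times}(A)$. The upper bound $E_{+}(AA/A)\ll|A|^{5/2-c}$ comes from Shkredov's estimate $E_{+}(A)\ll M^{14/13}|A|^{32/13}$ for $|AA|=M|A|$ combined with Pl\"unnecke--Ruzsa (this is where $1/26$ originates). The lower bound comes from: (i) Gowers' lemma applied to the containment graph, producing many pairs $(b_1,b_2)$ with many common neighbours $b$, each giving $b_1-b_2=(b_1+b)-(b_2+b)$ with both summands in $A$; (ii) the ratio set $R=\{(b_2+b)/(b_1+b)\}\subseteq A/A$ and the identity $1-\frac{b_2+b}{b_1+b}=\frac{b_1+b'}{b_1+b}-\frac{b_2+b'}{b_1+b}$, which shows each $x\in R$ has many representations $1-x=\alpha_1-\alpha_2$ with $\alpha_i\in A/A$; (iii) a Cauchy--Schwarz argument against the Jones/Roche-Newton bound $O(|A|^4\log|A|)$ for the equation $(a-b)(a'-c')=(a-c)(a'-b')$, which lower-bounds $|R|$; and (iv) dilating by $Y=A$ to convert these popular differences into $\gtrsim L^{-10}K^{-17}|A|^{5/2}$ additive quadruples in $AA/A$ (this is where the $17$ originates). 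None of (i)--(iv) appears in your proposal, and without them or substitutes of comparable strength there is no route from your setup to any power saving over $N^{1/2}$, let alone the stated constant.
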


Theorem \ref{thm:smalldoubling} implies a `power-saving' estimate for the multiplicative energy $E_\times$ of a sumset or a difference set, significantly improving on the sub-exponential bound of \cite{RocheNewtonZhelezov}.
\begin{corr} \label{corr:energy}
 There is an effective constant $\epsilon_0 > 0$ such that for any set $B$  of real numbers holds
 $$
 E_{\times} (B \pm B) \leq |B|^{6 - \epsilon_0}.
 $$
\end{corr}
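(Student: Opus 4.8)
The plan is to derive Corollary~\ref{corr:energy} from Theorem~\ref{thm:smalldoubling} by a Balog--Szemer\'edi--Gowers (energy-increment) argument: if the multiplicative energy of $B \pm B$ were close to the trivial maximum $|B \pm B|^3 \le |B|^6$, then $B \pm B$ would contain a large, multiplicatively structured subset $A$, and this $A$ would possess an additive basis of size $\le 2|B|$ of a kind that Theorem~\ref{thm:smalldoubling} rules out. First I would reduce to $|B| \ge C_1$ for a suitable absolute $C_1$: for $|B| = 1$ the inequality is trivial, and for $2 \le |B| \le C_1$ it follows from $E_\times(B \pm B) \le |B \pm B|^3 \le (|B|^2 - |B| + 1)^3$, which is a fixed power strictly below $|B|^6$, so minimising over this finite range of cardinalities produces an admissible $\epsilon_0$ there.

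Now set $X := (B \pm B) \setminus \{0\}$; deleting $0$ (if present) changes $E_\times$ by at most $O(|B|^4)$, which is negligible below. In both the $+$ and the $-$ case one has $X \subseteq B' + B'$ with $B' := B \cup (-B)$ and $|B'| \le 2|B|$, so $B'$ is an additive basis for every $A \subseteq X$; also $|X| \le |B|^2$. Suppose, for contradiction, that $E_\times(B \pm B) > |B|^{6 - \epsilon_0}$ for a small $\epsilon_0 > 0$ to be fixed. Then $E_\times(X) > |B|^{6 - 2\epsilon_0}$ (for $|B|$ large), hence $|X| > |B|^{2 - 2\epsilon_0/3}$ and $K := |X|^3/E_\times(X) < |B|^{2\epsilon_0}$. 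By the multiplicative form of the Balog--Szemer\'edi--Gowers theorem there are an absolute constant $C$ and a set $A \subseteq X$ with
$$
|A| \gg K^{-C}|X| > |B|^{\,2 - 2\epsilon_0(1/3 + C)}
\qquad\text{and}\qquad
|AA| \ll K^{C}|A| < |B|^{\,2C\epsilon_0}|A| .
$$

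If $\epsilon_0$ is small enough in terms of $C$, the first bound gives $|A| > |B|^{9/5}$ for $|B|$ large (the implied constant being absorbed since the exponent exceeds $9/5$), and then $|B|^{2C\epsilon_0} \le |A|^{10C\epsilon_0/9} \le |A|^{\epsilon}$ provided also $\epsilon_0 \le 9\epsilon/(10C)$, $\epsilon$ being the constant of Theorem~\ref{thm:smalldoubling}; absorbing the implied constant (valid as $|A| \to \infty$ with $|B|$) yields $|AA| \le |A|^{1+\epsilon}$. Since $A \subseteq X \subseteq B' + B'$, Theorem~\ref{thm:smalldoubling} now applies and gives
$$
2|B| \ge |B'| \gg |A|^{1/2 + 1/442 - o(1)} > |B|^{\,(2 - 2\epsilon_0(1/3+C))(1/2 + 1/442 - o(1))} ,
$$
the $o(1)$ tending to $0$ as $|B| \to \infty$ because $|A| > |B|^{9/5}$. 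As $2(1/2 + 1/442) = 1 + 1/221 > 1$, for $\epsilon_0$ small and $|B|$ large the exponent on the right exceeds $1 + 1/442$, contradicting $|B'| \le 2|B|$. Taking $\epsilon_0$ to be the least of the finitely many thresholds above (all explicit in $C$ and in the $\epsilon$ of Theorem~\ref{thm:smalldoubling}) completes the proof.

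The one genuinely delicate point is the calibration in the middle step: the subset $A$ extracted from the high energy must simultaneously be large enough to contradict Theorem~\ref{thm:smalldoubling} \emph{and} have multiplicative doubling below the threshold $|A|^{1+\epsilon}$ that that theorem requires. This works precisely because the Balog--Szemer\'edi--Gowers loss exponent $C$ is independent of $\epsilon_0$, so the two competing demands can be met by choosing $\epsilon_0$ below an explicit function of $C$ and $\epsilon$; the remaining ingredients --- deleting $0$, replacing $B - B$ by the symmetric basis $B \cup (-B)$, and the arithmetic of exponents --- are routine.
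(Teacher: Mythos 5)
Your argument is correct and is essentially the same as the paper's: the paper also proves this corollary by applying the multiplicative Balog--Szemer\'edi--Gowers theorem to $X = B \pm B$ to extract a large subset $X'$ with $|X'X'| \le |X'|^{1+\epsilon'}$, observing that $B \cup (-B)$ is an additive basis for $X'$ of size about $|X'|^{1/2}$, and contradicting Theorem~\ref{thm:smalldoubling}. The paper presents this only as a sketch; your write-up supplies the same calibration of exponents (largeness of the extracted set versus its doubling) in full detail.
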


In fact, we prove an even stronger statement than that of Theorem \ref{thm:smalldoubling}, replacing the condition $A \subseteq B + B$ by a weaker assumption that the number of pairs
$(b_1, b_2) \in B \times B : b_1 + b_2 \in A$ is large, see the upcoming Lemma \ref{lm:convolutions} for details. While slightly more technical, such a reformulation may be useful if one wishes to take into account the number of ways an element in $A$ can be represented as a sum $b_1 + b_2$, see Remark \ref{r:sigma}.

   Second, we show that $A$ is \emph{additively irreducible}, i.e. it cannot be written as a set of sums $B+C$ unless one of the sets consists of a single element.
\begin{theorem} \label{thm:irreducibility}
	There is an $\epsilon > 0$ such that  for all sufficiently large $A \subset \mathbb{R}$ with $|AA| \leq |A|^{1+\epsilon}$ there is no decomposition $A = B + C$ with $|B|, |C| \geq 2$.
\end{theorem}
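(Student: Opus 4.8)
The plan is to argue by contradiction, combining Theorem~\ref{thm:smalldoubling} with an auxiliary bound showing that a set of small multiplicative doubling cannot meet a nontrivial translate of itself in a large set. Concretely, suppose $A = B + C$ with $|B|,|C| \ge 2$ and $|AA| \le |A|^{1+\epsilon}$. The decisive step will be to prove that for every $d \ne 0$
\[
 |A \cap (A+d)| \ll |A|^{1/2 + c\epsilon + o(1)}
\]
for an absolute constant $c$. To establish this I would put $Y := A \cap (A+d)$, so that $Y \subseteq A$ and $Y - d \subseteq A$; then for $y,y' \in Y$ the four products $yy'$, $(y-d)(y'-d)$, $y(y'-d)$, $(y-d)y'$ all lie in $AA$ and satisfy the polarisation identity $yy' + (y-d)(y'-d) - y(y'-d) - (y-d)y' = d^2$. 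Since the map $(y,y') \mapsto \bigl(yy',\,(y-d)(y'-d),\,y(y'-d),\,(y-d)y'\bigr)$ is injective on pairs with $y \ne 0$ (from the first and third coordinates one recovers $y = (yy' - y(y'-d))/d$ and then $y'$), this yields $|Y|^2 \le Q_{AA}(d^2) + O(|Y|)$, where $Q_P(s)$ denotes the number of quadruples $(u_1,u_2,u_3,u_4) \in P^4$ with $u_1 + u_2 - u_3 - u_4 = s$. It then remains to show $Q_{AA}(d^2) \ll |A|^{1+O(\epsilon)+o(1)}$, and here one uses that $AA$ is itself multiplicatively structured: by the Pl\"unnecke--Ruzsa inequality $|AA\cdot AA| = |A^4| \le |A|^{1+3\epsilon}$, so $AA$ behaves additively like a geometric progression and in particular has very few additive quadruples with a prescribed nonzero alternating sum -- a sum-product type statement of the same flavour as those underlying Theorem~\ref{thm:smalldoubling}.

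Granting this auxiliary estimate, the theorem follows quickly. Choosing $b_0 \ne b_1 \in B$ and setting $d := b_1 - b_0 \ne 0$, the inclusions $C + b_0,\, C + b_1 \subseteq A$ give $C + b_0 \subseteq A \cap (A - d)$, hence $|C| \ll |A|^{1/2 + c\epsilon + o(1)}$; symmetrically, using two distinct elements of $C$, also $|B| \ll |A|^{1/2 + c\epsilon + o(1)}$. Now $D := B \cup C$ is an additive basis for $A$, since $A = B + C \subseteq D + D$, and $|D| \le |B| + |C| \ll |A|^{1/2 + c\epsilon + o(1)}$, whereas Theorem~\ref{thm:smalldoubling} (or, more precisely, Lemma~\ref{lm:convolutions}) forces $|D| \gg |A|^{1/2 + 1/442 - o(1)}$. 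Taking $\epsilon$ small enough that $c\epsilon < 1/442$ gives a contradiction once $|A|$ is large, proving the theorem.

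I expect the main obstacle to be the bound $Q_{AA}(d^2) \ll |A|^{1+O(\epsilon)+o(1)}$. The trivial estimate $Q_{AA}(s) \le E_{+}(AA)$ coming from Cauchy--Schwarz is useless, since $E_{+}(AA) \gg |AA|^2 \asymp |A|^2$ always; one genuinely has to exploit the shift by the nonzero quantity $d^2$ together with the multiplicative rigidity of $AA$ to recover a saving of essentially a full power of $|A|$ (note also that for a value $s$ that happens to be a super-popular difference of $AA$ the bound degrades, so one must rule this out using that $AA$, having small multiplicative doubling, has no popular differences -- a mild bootstrap). This is exactly where real sum-product input enters, and it is also the reason the statement, like Theorem~\ref{thm:smalldoubling}, is claimed only for sufficiently small $\epsilon$ rather than for all $\epsilon < 1$.
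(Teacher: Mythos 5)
Your reduction is clean in its outer shell: if one could show $|A\cap(A+d)|\ll|A|^{1/2+c\epsilon+o(1)}$ for every $d\neq 0$, then indeed both $|B|$ and $|C|$ would be $\ll|A|^{1/2+c\epsilon+o(1)}$, the set $B\cup C$ would be a too-small additive basis for $A$, and Theorem~\ref{thm:smalldoubling} would finish the job. The polarisation identity and the injectivity of $(y,y')\mapsto(yy',(y-d)(y'-d),y(y'-d),(y-d)y')$ are also correct, giving $|Y|^2\le Q_{AA}(d^2)+O(|Y|)$. But the decisive step, $Q_{AA}(d^2)\ll|A|^{1+O(\epsilon)+o(1)}$, is not a lemma you can wave at: it is far beyond the estimates available here and is essentially an open problem. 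The only general bound one has for $Q_P(s)$ is $Q_P(s)\le E_+(P)$ via Cauchy--Schwarz, and even for $A$ itself (let alone $AA$) the best known energy bound under $|AA|\le|A|^{1+\epsilon}$ is Theorem~\ref{thm:ShkredovEnergyBound}, i.e.\ $E_+(A)\ll|A|^{32/13+o(1)}$; the conjectural $E_+(A)\ll|A|^{2+o(1)}$ is itself open. Your heuristic that ``$AA$ behaves like a geometric progression'' is exactly the kind of statement the whole sum-product area is trying, and failing, to prove in this strength. Equivalently, the intersection bound you need, $|A\cap(A+d)|\ll|A|^{1/2+o(1)}$, is the real analogue of the conjectural bound $|G\cap(G+1)|\ll|G|^{1/2+o(1)}$ for multiplicative subgroups; what is actually known (and what the paper uses, via Lemma 29 of \cite{ShkredovSmallProductIrreducibility}) is only $|A\cap(A+d)|\le M^{4/3}|A|^{2/3}$, and with exponent $2/3$ your contradiction with Theorem~\ref{thm:smalldoubling} disappears entirely.

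For comparison, the paper uses the $|A|^{2/3}$ intersection bound only to guarantee $|B|,|C|\gg|A|^{1/3-\epsilon}$, i.e.\ that neither factor is tiny; it makes no attempt to push both factors down to $|A|^{1/2+o(1)}$. The actual contradiction is obtained differently: one forms the ratio sets $X=\{(b_1+c)/(b_2+c)\}$ and $Y=\{(c_1+b)/(c_2+b)\}$, shows $E_+(AA/A)\gg\min(|A||X||C|,|A||Y||B|)$ as in Lemmas~\ref{lm:popular_diff} and~\ref{lm:more_popular_diff}, bounds this above by $|A|^{5/2-c}$ via Corollary~\ref{corr:energy_bound_upper}, and then lower-bounds $|Y|$ through a count of collinear triples on the asymmetric Cartesian products $C\times C$ and $B\times B$ (Lemma~\ref{lm:T_CCB}, a Szemer\'edi--Trotter argument). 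That asymmetric triple-count is the new ingredient that handles the regime $|B|\gg|C|$, which your symmetric strategy cannot reach without the unproven intersection bound. So the proposal has a genuine gap at its central step.
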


   Both theorems substantially extend the results of Roche-Newton and the second author \cite{RocheNewtonZhelezov}, where a much more restrictive condition $|AA| \leq K|A|$ with $K$ fixed, was assumed. In particular, in the case at hand deep structural results such as the Freiman-type theorem of Sanders (see e.g. \cite{SandersExp}) and the Subspace theorem of Evertse, Schlickewei and Schmidt \cite{evertse2002linear} are no longer available. Instead, we rely solely on techniques of additive combinatorics, which we see as the main innovation of the present paper. On the other hand, Theorem \ref{thm:irreducibility} extends the result of the first author \cite{ShkredovSmallProductIrreducibility}, which is the specialisation of Theorem \ref{thm:irreducibility} to the case $C = -B$.

Clearly, Theorem \ref{thm:irreducibility} has the following sum-product result as a corollary.

\begin{corr} \label{corr:sum-product}
    There is an absolute constant $c > 0$ such that for any real sets $B, C$ with $|B|, |C| \geq 2$ holds
    $$
        \left| (B+C)(B+C) \right| \gg |B+C|^{1+c}.
    $$
\end{corr}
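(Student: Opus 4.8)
The plan is to deduce this immediately from Theorem~\ref{thm:irreducibility} by contraposition, taking $c$ to be the same $\epsilon$ that Theorem~\ref{thm:irreducibility} provides. Fix real sets $B, C$ with $|B|, |C| \geq 2$ and put $A := B + C$. The first step is to dispose of the degenerate range. Since $|B+C| \geq |B| + |C| - 1 \geq 3$, the set $A$ has at least two elements, hence contains a nonzero element $a_0$, and the dilate $a_0 A \subseteq AA$ already yields $|AA| \geq |A|$. Therefore, whenever $|A| \leq C_0(\epsilon)$ the desired bound $|(B+C)(B+C)| \gg |B+C|^{1+c}$ holds trivially, the bounded factor $C_0(\epsilon)^{c}$ being absorbed into the implied constant.

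In the main range $|A| > C_0(\epsilon)$, the point is simply that $A = B + C$ is, by hypothesis, a decomposition of $A$ as a sumset with both summands of size at least $2$. Applying Theorem~\ref{thm:irreducibility} with its $\epsilon$, such a decomposition is impossible once $|AA| \leq |A|^{1+\epsilon}$; hence we must have $|(B+C)(B+C)| = |AA| > |A|^{1+\epsilon} = |B+C|^{1+c}$, which is exactly the assertion of the corollary.

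I expect no genuine obstacle beyond routine bookkeeping: one only has to verify that $A = B+C$ really meets the two hypotheses of Theorem~\ref{thm:irreducibility}, namely that it is a nontrivial additive decomposition (guaranteed by $|B|,|C|\ge 2$) and that $A$ is sufficiently large (handled by splitting off the small-$|B+C|$ case via the trivial estimate $|AA|\ge|A|$). All the substance of Corollary~\ref{corr:sum-product} is inherited from Theorem~\ref{thm:irreducibility}, and the value of $c$ here is precisely the $\epsilon$ appearing there.
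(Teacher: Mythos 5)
Your proposal is correct and is exactly the derivation the paper intends: the corollary is stated as an immediate contrapositive of Theorem~\ref{thm:irreducibility}, with the small-$|B+C|$ range absorbed into the implied constant via the trivial bound $|AA|\geq |A|$, just as you do.
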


   Both questions considered in the paper arise as a natural extension of classical problems concerning thin additive bases and additive reducibility of integer sequences to the finite setting.

   One of the basic questions, posed by Erd\H{o}s and Newman \cite{ErdosNewman} in 1976, is to estimate the minimal basis size for a given set $A$ and, in particular, to decide if there is a very thin basis for $A$ of size of order $|A|^{1/2}$. They noted that while for a randomly picked set $A$ one should expect that no such basis exists, it is in general hard to prove such a claim for a specific set $A$.

   The question of additive irreducibility of integer sequences was posed by Ostmann \cite{Ostmann} back in 1956. Perhaps the most notable conjecture of Ostmann, still wide open, is that the set of prime numbers cannot be written as a non-trivial set of sums $B+C$ even if one allows to discard a finite number of elements. We refer the reader to  \cite{elsholtz2006additive} for the history of the problem and \cite{ElsholtzHarper} for the state of the art partial results (see also \cite{GreenHarper}). 
   S\'ark\"ozy \cite{SarkozyDecompositions} (see also \cite{gyarmati2013reducible} and references therein) extended such problems to the finite field setting, perhaps led by the intuition that in general `multiplicatively structured' sets should be additively irreducible (though this intuition may sometimes be misleading, as showed by Elsholtz in \cite{Elsholtz2008}). As a special case of this program, S\'ark\"ozy conjectured that  multiplicative subgroups of finite fields of prime order are additively irreducible, in particular the set of quadratic residues modulo a prime. Despite some progress (see e.g. \cite{ShkredovQuadraticResidues} and references therein), the conjecture of S\'ark\"ozy is an important open problem with rich connections (to e.g. multiplicative character sums).

   From this point of view, Theorem \ref{thm:irreducibility} can be seen as a real analog of  S\'ark\"ozy's conjecture for small subgroups, and in fact the proof can be transferred to the finite field setting except a certain sum-product-type estimate not currently available in finite fields. We discuss this matter in more detail at the end of the paper.

The notation is briefly explained in the next section and is relatively standard in additive combinatorics. We recommend the reader to consult \cite{TaoVu} for further details when needed.

\section{Notation}
\label{sec:notation}

   The following notation is used throughout the paper. The expressions $X \gg Y$, $Y \ll X$, $Y = O(X)$, $X = \Omega(Y)$ all have the same meaning that there is an absolute constant $c > 0$ such that $|Y| \leq c|X|$. The expressions $X \gtrsim Y$ and $Y \lesssim X$ both mean that $|Y| = O(|X| \log^C |X|)$ for some absolute constant $C > 0$.

For a graph $G$, $E(G)$ denotes the set of edges and $V(G)$ denotes the set of vertices. If $X$ is a set then $|X|$ denotes its cardinality.

For sets of numbers $A$ and $B$ the sumset $A + A$ is the set of all pairwise sums  $\{ a + a' : a, a' \in A \}$, and similarly $AA$, $A-A$ denotes the set of products and differences, respectively. If $G$ is some graph with the vertex set identified with $A$ then $A\stackrel{G}{+}A$ denotes the set of sums $\{ a + a' : (a, a') \in E(G) \}$ (with the obvious generalisation to all arithmetic operations).

For a bipartite graph $G$ with parts $(V_1, V_2)$ we will occasionally use the term \emph{bipartite density} for the quantity $\frac{|E(G)|}{|V_1||V_2|}$. For a general graph, the edge \emph{density} is defined as the quotient $\frac{|E(G)|}{|V(G)|^2}$, where the standard notation $E(G), V(G)$ is used for the set of edges and vertices, respectively. For a vertex $v \in V(G)$, $N(v)$ denotes the set of neighbours of $v$.

The additive energy $E_+(A)$  (see \cite{TaoVu}, Definition 2.8) denotes the number of additive  quadruples $(a_1, a_2, a_3, a_4)$ such that $a_1 + a_2 = a_3 + a_4$. The multiplicative energy $E_\times$ is defined similarly as the number of multiplicative quadruples.

\section{Proof of Theorem \ref{thm:smalldoubling}}
\label{sec:proof_smd}

In what follows we will always assume that $0 \notin A$ but $1 \in A$ which one can do without loss of generality.

We will use the following result of the first author (Theorem 5.4 in \cite{shkredov2013}).
\begin{theorem} \label{thm:ShkredovEnergyBound}
	Let $A \subset \mathbb{R}$ be such that $|AA| =  M|A|$. Then
	$$
	E_{+}(A) \ll M^{14/13}|A|^{32/13} \log^{71/65} |A|.
	$$
\end{theorem}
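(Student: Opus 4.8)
The plan is to convert the multiplicative smallness of $A$ into an incidence estimate through the Szemer\'edi--Trotter theorem and then optimise.

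First I would open up the energy as $E_+(A) = \sum_t \sigma(t)^2$, where $\sigma(t) := |A \cap (A + t)| = r_{A-A}(t)$, and dyadically pigeonhole: there is a scale $\Delta$ and a level set $P = P_\Delta := \{ t : \Delta \le \sigma(t) < 2\Delta \}$ with $E_+(A) \lesssim \Delta^2 |P|$, while trivially $\Delta |P| \le \sum_t \sigma(t) = |A|^2$. Essentially all of the logarithmic loss in the final bound (the exponent $71/65$) is generated here and in the analogous dyadic steps below, so I would defer tracking it until the very end. The problem is thereby reduced to controlling how large and how concentrated the difference-representation function of $A$ can be given $|AA| = M|A|$, i.e.\ to the estimate $\Delta^2 |P| \lesssim M^{14/13}|A|^{32/13}$.

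Second, I would feed in the multiplicative structure. Since one may assume $0 \notin A$, the Pl\"unnecke--Ruzsa inequality gives $|A/A| \le M^2 |A|$, and more generally $M^{O(1)}|A|$ for every iterated product or ratio set of $A$; also $E_\times(A) \ge |A|^3/M$. The device needed to create the many distinct directions without which Szemer\'edi--Trotter says nothing is to pass to a mixed additive--multiplicative energy of $A$, for example
$$
	Q := |\{ (a_1, \dots, a_6) \in A^6 : a_1(a_2 - a_3) = a_4(a_5 - a_6) \}|,
$$
which dominates $|A| \cdot E_+(A)$ (group the sextuples by $a_4/a_1 \in A/A$ and retain the diagonal $a_1 = a_4$). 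Bounding $Q$ is now a genuine sum--product problem: it counts incidences between a Cartesian product built from $A$ and $A - A$ and a line family whose slopes run over the \emph{small} ratio set $A/A$ (or $(A-A)/(A-A)$), so the slopes genuinely vary and Szemer\'edi--Trotter applies, with the number of lines capped at $M^{O(1)}|A|^{O(1)}$ by Pl\"unnecke--Ruzsa.

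A single pass through this scheme gives only a suboptimal exponent on $M$ (of the shape $4/3$ rather than $14/13$), and squeezing out the sharp value is the main obstacle, where essentially all of the work lies. Here I would iterate: feed the provisional bound on $E_+(A)$, or on an auxiliary higher multiplicative energy $E_k^\times(A)$ (itself bounded via Szemer\'edi--Trotter), back into a sharpened incidence step --- first restricting to a subset of $A$ on which $\sigma$ is regular, and interpolating energies of two different orders --- then solve for the fixed point of the recursion; the exponents $14/13$, $32/13$ and the log-power $71/65$ are precisely its output. A secondary technical point, needed to keep Szemer\'edi--Trotter from being lossy, is to rule out degenerate near-one-dimensional configurations (e.g.\ where $A$ concentrates on a geometric progression), handled by a preliminary refinement of $A$ via Pl\"unnecke--Ruzsa together with a Balog--Szemer\'edi--Gowers-type argument.
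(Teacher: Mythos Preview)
The paper does not prove this theorem at all: it is quoted as Theorem~5.4 of \cite{shkredov2013} and used as a black box, so there is no argument in the present paper to compare your proposal against.

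Regarding the proposal on its own merits: the broad shape (dyadic decomposition of $E_+(A)$, passage to a mixed additive--multiplicative moment such as your $Q$, Szemer\'edi--Trotter applied to a Cartesian point set against lines with slopes in a small ratio set, Pl\"unnecke--Ruzsa to cap the ratio set) is indeed the correct genre and matches the method of \cite{shkredov2013}. But what you have written is an outline, not a proof. You yourself flag that ``essentially all of the work lies'' in the iteration/interpolation step, and then that step is described only as ``feed the provisional bound back \dots\ solve for the fixed point of the recursion; the exponents $14/13$, $32/13$ and the log-power $71/65$ are precisely its output''. This is a placeholder: you do not say which higher energies are being interpolated, what the actual recursion is, or why its solution yields these particular numbers. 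In \cite{shkredov2013} the exponents come from a concrete chain of inequalities linking $E_+(A)$, the cubic energy $E_3(A)$, and auxiliary quantities via several distinct Szemer\'edi--Trotter applications, and the optimisation is explicit. Absent that, your sketch would equally well ``predict'' any exponent between the trivial $3$ and the conjectural $2+o(1)$.
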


Combining with the Pl\"unnecke-Ruzsa inequality (\cite{TaoVu}, Corollary 6.29), we have the following corollary.
\begin{corr} \label{corr:energy_bound_upper}
	For any $c < 1/26$ there is $\eps > 0$ such that for any $A \subset \mathbb{R}$ with $|AA| \leq |A|^{1+\eps}$ holds
	$$
		E_+(AA/A) \ll |A|^{5/2 - c}.	
	$$
\end{corr}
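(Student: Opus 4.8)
The plan is to combine the energy bound of Theorem \ref{thm:ShkredovEnergyBound} with Pl\"unnecke--Ruzsa to control the multiplicative doubling of the set $AA/A$, and then apply Theorem \ref{thm:ShkredovEnergyBound} once more to $AA/A$ itself. First I would observe that, by the Pl\"unnecke--Ruzsa inequality applied in the multiplicative group $\mathbb{R}^\times$ (or $\mathbb{R}_{>0}$ after the harmless reduction $0 \notin A$), the hypothesis $|AA| \leq |A|^{1+\eps} = M|A|$ with $M = |A|^\eps$ gives $|AA/A| \leq M^3 |A| = |A|^{1+3\eps}$, and more generally $|(AA/A)(AA/A)/(AA/A)|$ and similar multiplicative combinations of $AA/A$ are all bounded by $|A|^{1+O(\eps)}$. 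In particular, writing $A' := AA/A$, we have $|A'A'| \leq |A'|^{1 + O(\eps)}$ as well, since $|A'| \geq |A|$ and the relevant product set is polynomially bounded in $|A|$.

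Next I would apply Theorem \ref{thm:ShkredovEnergyBound} to the set $A' = AA/A$ with multiplicative doubling parameter $M' = |A'A'|/|A'| \leq |A|^{O(\eps)}$. This yields
$$
E_+(AA/A) \ll (M')^{14/13} |A'|^{32/13} \log^{71/65} |A'| \ll |A|^{O(\eps)} |A'|^{32/13} \log^{71/65} |A'|.
$$
Since $|A'| = |AA/A| \leq |A|^{1+3\eps}$, we get $|A'|^{32/13} \leq |A|^{(1+3\eps) \cdot 32/13} = |A|^{32/13 + O(\eps)}$, and $32/13 < 5/2$ with room to spare ($5/2 - 32/13 = 1/26$). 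Absorbing the logarithmic factor and the $|A|^{O(\eps)}$ terms, we obtain $E_+(AA/A) \ll |A|^{5/2 - 1/26 + O(\eps)}$. Finally, given any target $c < 1/26$, choosing $\eps > 0$ small enough that the accumulated $O(\eps)$ error is below $1/26 - c$ yields the claimed bound $E_+(AA/A) \ll |A|^{5/2 - c}$.

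The only delicate point is bookkeeping the exponent: one must track exactly how the factor $3\eps$ in $|AA/A| \leq |A|^{1+3\eps}$ and the exponent $32/13$ interact, and verify that the base exponent is genuinely $32/13$ rather than something larger, so that the slack $5/2 - 32/13 = 1/26$ is available to absorb $\eps$-losses and the logarithm. I expect no real obstacle here — it is a routine application of Pl\"unnecke--Ruzsa followed by the cited energy bound — but care is needed to confirm that Theorem \ref{thm:ShkredovEnergyBound} applies to $AA/A$ with a doubling constant that is subpolynomial in $|A|$, which is exactly what Pl\"unnecke--Ruzsa guarantees under the hypothesis $|AA| \leq |A|^{1+\eps}$.
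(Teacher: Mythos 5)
Your proposal is correct and is exactly the argument the paper intends (the paper leaves it as a one-line remark "combining with Pl\"unnecke--Ruzsa"): apply Pl\"unnecke--Ruzsa multiplicatively to see that $A'=AA/A$ has $|A'|\le |A|^{1+3\eps}$ and multiplicative doubling $|A|^{O(\eps)}$, then apply Theorem \ref{thm:ShkredovEnergyBound} to $A'$, with the slack $5/2-32/13=1/26$ absorbing the $O(\eps)$ and logarithmic losses. The exponent bookkeeping you flag checks out.
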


Recall the following graph--theoretic lemma of Gowers (see e.g. \cite{TaoVu}, Lemma 6.19).
\begin{lemma} \label{lm:pairs}
Let $G$ be a bipartite graph on  $(B_1, B_2)$ where $|B_1| = |B_2| = n$ and $0 < \alpha = E(B_1, B_2)/n^2$. Let $0 < \epsilon < 1$ be fixed. Then there is a subset $B'_1 \subseteq B_1$ with $|B'_1| \geq \alpha n/2$ such that for at least $(1-\epsilon)|B'_1|^2$ of the ordered pairs of vertices $(v_1, v_2) \in B'_1 \times B'_1$ the following holds:
$$
|N(v_1) \cap N(v_2)| \geq \frac{\epsilon \alpha^2 n}{2}.
$$
\end{lemma}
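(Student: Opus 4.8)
The plan is to prove this by a single step of dependent random choice: I would let $w$ be a vertex of $B_2$ chosen uniformly at random and take $B'_1 := N(w)$, the neighbourhood of $w$ in $B_1$, and then argue that for a suitably chosen $w$ both required conclusions hold simultaneously.

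Two ingredients drive the argument. First, since $\sum_{w \in B_2}|N(w)| = E(B_1,B_2) = \alpha n^2$ we have $\mathbb{E}_w |N(w)| = \alpha n$, so Cauchy--Schwarz gives the lower bound $\mathbb{E}_w |N(w)|^2 \geq \alpha^2 n^2$. Second, call an ordered pair $(v_1,v_2) \in B_1 \times B_1$ \emph{bad} if $|N(v_1) \cap N(v_2)| < \epsilon\alpha^2 n/2$; a bad pair lies in $N(w) \times N(w)$ exactly when $w \in N(v_1) \cap N(v_2)$, so averaging over $w$ yields
$$
\mathbb{E}_w\big[\#\{\text{bad pairs in } N(w)\times N(w)\}\big] = \sum_{(v_1,v_2)\text{ bad}} \frac{|N(v_1)\cap N(v_2)|}{n} < n^2\cdot\frac{\epsilon\alpha^2}{2}.
$$

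Combining the two estimates by linearity of expectation gives
$$
\mathbb{E}_w\Big[|N(w)|^2 - \tfrac1\epsilon\,\#\{\text{bad pairs in } N(w)\times N(w)\}\Big] \geq \alpha^2 n^2 - \tfrac1\epsilon\cdot\tfrac{\epsilon\alpha^2 n^2}{2} = \tfrac{\alpha^2 n^2}{2}.
$$
I would then fix a vertex $w$ attaining at least this value and set $B'_1 := N(w)$: from $|B'_1|^2 \geq |B'_1|^2 - \tfrac1\epsilon\#\{\text{bad}\} \geq \alpha^2 n^2/2$ one gets $|B'_1| \geq \alpha n/\sqrt2 \geq \alpha n/2$, while $|B'_1|^2 - \tfrac1\epsilon\#\{\text{bad in }B'_1\times B'_1\} \geq 0$ forces $\#\{\text{bad in }B'_1\times B'_1\} \leq \epsilon|B'_1|^2$, i.e. at least $(1-\epsilon)|B'_1|^2$ ordered pairs in $B'_1\times B'_1$ have common neighbourhood of size at least $\epsilon\alpha^2 n/2$, as required. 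There is no genuine obstacle here; the only point that needs to be got right is the choice of the linear combination $|N(w)|^2 - \frac1\epsilon(\#\text{bad})$, which is tuned precisely so that one expectation inequality simultaneously certifies that some $N(w)$ is both large and mostly composed of good pairs.
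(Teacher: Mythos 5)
Your proof is correct: the single-vertex dependent random choice argument, with the linear combination $\mathbb{E}_w\bigl[|N(w)|^2 - \tfrac1\epsilon\#\{\text{bad pairs}\}\bigr]$ certifying both conclusions at once, is exactly the standard proof of this lemma as given in the source the paper cites (Tao--Vu, Lemma 6.19); the paper itself supplies no proof. All the estimates check out, including the handling of diagonal pairs, which your double counting over $w \in N(v_1)\cap N(v_2)$ treats consistently.
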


In what follows, we use a weaker definition of a basis, merely assuming that the number of pairs $(b_1, b_2) \in B \times B: b_1 + b_2 \in A$ is large. First, it allows one to take into account `multiplicities', i.e. elements in $A$ which have many representations as a sum. Second, it gives additional flexibility by relaxing the condition that $B+B$ contains \emph{all} elements in $A$.

For this matter, let us call a set $B$ an $(L, K)$\emph{-basis} for $A$, with $K,L \ge 1$, if $|B| = K|A|^{1/2}$ and
$$
	\left| \{(b_1, b_2) : b_1 + b_2  \in A \} \right| = L^{-1}|A|.
$$

We will also make a use of the \emph{containment graph} $G$, which is a bipartite graph $G$ on $(B, B)$ such that $(b_1, b_2) \in E(G)$ if and only if $b_1 + b_2 \in A$.  In particular, if $B$ is an $(L, K)$-basis, one has $|E(G)| = L^{-1}|A| = |B|^2/LK^2$, so the bipartite edge density of $G$ is $L^{-1}K^{-2}$.

\begin{lemma}[The set of popular differences contains an almost closed difference set] \label{lm:convolutions}
	Let $B$ be an $(L, K)$-basis for $A$. Then there is $B' \subset B$ with $|B'| \gg L^{-1}K^{-2}|B|$ such that for at least $0.99 |B'|^2$ of the ordered pairs $(b_1, b_2) \in B' \times B'$ the equation
	\beq \label{eq:representations}
		b_1 - b_2 = a - a'	
	\eeq
	has at least $\Omega(L^{-2}K^{-3}|A|^{1/2})$ solutions $(a, a') \in A \times A$.
\end{lemma}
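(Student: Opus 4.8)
The plan is to apply Gowers' graph-theoretic lemma (Lemma \ref{lm:pairs}) directly to the containment graph $G$ on $(B, B)$ and then translate the resulting bound on common neighbourhoods into a bound on the number of representations of popular differences. First I would record the relevant parameters: since $B$ is an $(L,K)$-basis we have $|B| = K|A|^{1/2} =: n$ and $|E(G)| = L^{-1}|A|$, so the bipartite edge density of $G$ equals $\alpha = L^{-1}|A|/n^2 = L^{-1}K^{-2}$. Applying Lemma \ref{lm:pairs} with the fixed parameter $\epsilon = 0.01$ yields a subset $B' \subseteq B$ with
$$
	|B'| \geq \frac{\alpha n}{2} = \frac{1}{2} L^{-1}K^{-1}|A|^{1/2} \gg L^{-1}K^{-2}|B|,
$$
such that for at least $0.99|B'|^2$ of the ordered pairs $(b_1, b_2) \in B' \times B'$ one has
$$
	|N(b_1) \cap N(b_2)| \geq \frac{\epsilon \alpha^2 n}{2} = \frac{0.01}{2}\, L^{-2}K^{-3}|A|^{1/2} \gg L^{-2}K^{-3}|A|^{1/2}.
$$

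The second step is the combinatorial translation. Fix such a pair $(b_1, b_2)$ and let $b \in N(b_1) \cap N(b_2)$; by definition of $G$ this means $b_1 + b \in A$ and $b_2 + b \in A$. Setting $a := b_1 + b$ and $a' := b_2 + b$ we obtain a pair $(a, a') \in A \times A$ with $a - a' = b_1 - b_2$, which is a solution of \eqref{eq:representations}. Moreover the map $b \mapsto (a, a') = (b_1 + b, b_2 + b)$ is injective (one recovers $b = a - b_1$), so distinct common neighbours give distinct solutions. Hence \eqref{eq:representations} has at least $|N(b_1) \cap N(b_2)| \gg L^{-2}K^{-3}|A|^{1/2}$ solutions, which is exactly the claimed bound; the lower bound on $|B'|$ has already been verified above. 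The diagonal pairs $b_1 = b_2$ number only $|B'| = o(|B'|^2)$ and may simply be absorbed into the $0.01|B'|^2$ exceptional pairs, so they cause no difficulty.

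I do not expect a genuine obstacle here — the statement is essentially a repackaging of Gowers' lemma — but the points requiring care are (i) keeping the dependence on $L$ and $K$ correct throughout the computation of $\alpha$, $|B'|$ and $\epsilon\alpha^2 n/2$, and (ii) making sure the common-neighbour-to-difference-representation correspondence is genuinely injective so that the count transfers without loss. If one later wants to also control multiplicities (cf.\ Remark \ref{r:sigma}), one would instead track, for each $b$, the number of pairs summing to $b_1+b$ and to $b_2+b$; but for the statement as given the injective correspondence above suffices.
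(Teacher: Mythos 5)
Your proposal is correct and follows essentially the same route as the paper: apply Gowers' lemma (with $\epsilon = 0.01$) to the containment graph and convert each common neighbour $b$ of a rich pair $(b_1,b_2)$ into a distinct solution $(b_1+b,\, b_2+b)$ of \eqref{eq:representations}. The parameter bookkeeping ($\alpha = L^{-1}K^{-2}$, $|B'| \geq \alpha n/2$, $\epsilon\alpha^2 n/2 \gg L^{-2}K^{-3}|A|^{1/2}$) matches the paper's computation exactly.
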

\begin{proof}
	Applying Lemma \ref{lm:pairs} to the containment graph $G$ we obtain a set $B'$ of size $\Omega(L^{-1}K^{-2}|B|)$ such that for $0.99 |B'|^2$ of the  ordered pairs $(b_1, b_2) \in B' \times B'$ holds
	$$
		|N(b_1) \cap N(b_2)| \geq \Omega(L^{-2}K^{-4}|B|).	
	$$
On the other side, if $b \in N(b_1) \cap N(b_2)$ then by construction $b + b_1 = a$ and $b + b_2 = a'$ for some
$a, a' \in A$, which after rearranging gives
$$
	b_1 - b_2 = a - a'.
$$
Since,  $b + b_1$ are all distinct as $b$ varies, we obtain at least $|N(b_1) \cap N(b_2)|$ distinct solutions of (\ref{eq:representations}) for a fixed pair $(b_1, b_2)$ (and $(b_2, b_1)$ as well).
The result follows.
\end{proof}

Another ingredient is a beautiful incidence result, originally proved by Jones \cite{Jones} with a shorter proof discovered by Roche-Newton \cite{roche2015short}.
\begin{lemma} \label{thm:RN}
	Let $A \subset \mathbb{R}$. Then the number of solutions to
	\beq \label{eq:triangles_collisions}
		 (a-b)(a'-c') = (a-c)(a'-b')	
	\eeq
	such that $a, a', b, b', c, c' \in A$ is $O(|A|^4 \log |A|)$.
\end{lemma}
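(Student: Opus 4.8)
The plan is to recast equation (\ref{eq:triangles_collisions}) as a statement about collinear point triples. Expanding both sides, one verifies the algebraic identity
$$
(a-b)(a'-c') - (a-c)(a'-b') = a(b'-c') + a'(c-b) + (bc'-b'c),
$$
and the right-hand side is exactly the $2\times 2$ determinant formed by the vectors $(b-a,\, b'-a')$ and $(c-a,\, c'-a')$. Hence the solutions of (\ref{eq:triangles_collisions}) are precisely the ordered triples of points $P_1=(a,a')$, $P_2=(b,b')$, $P_3=(c,c')$ that are collinear in $\mathbb{R}^2$, with every coordinate lying in $A$. So it is enough to bound the number of ordered collinear triples of points in the Cartesian grid $A\times A$, a planar point set of size $|A|^2$.

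Next I would separate off the degenerate configurations. Triples in which two of the three points coincide contribute at most $O(|A|^4)$ (choose the repeated point and the remaining point freely). Triples of three distinct points lying on a vertical line share an $x$-coordinate, hence number at most $|A|\cdot|A|^3 = |A|^4$. What remains is to count ordered triples of distinct points on a common non-vertical line $\ell$; writing $n_\ell$ for the number of points of $A\times A$ on $\ell$ (note $n_\ell \le |A|$, since a non-vertical line carries at most one point with a given $x$-coordinate), such a line supports at most $n_\ell^3$ ordered triples, so the problem reduces to estimating $\sum_\ell n_\ell^3$ over non-vertical lines.

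Here I would invoke the Szemer\'edi--Trotter theorem (see \cite{TaoVu}), in the form that an $m$-point set in the plane has $O(m^2/k^3 + m/k)$ lines containing at least $k$ of its points. Taking $m=|A|^2$ and decomposing the lines dyadically according to the range $n_\ell \in [k,2k)$ with $k$ a power of two, $2 \le k \le |A|$, gives
$$
\sum_\ell n_\ell^3 \;\ll\; \sum_{k \le |A|} k^3\left(\frac{|A|^4}{k^3} + \frac{|A|^2}{k}\right) \;=\; \sum_{k\le |A|}\big(|A|^4 + k^2|A|^2\big) \;\ll\; |A|^4\log|A| + |A|^2\cdot|A|^2 \;\ll\; |A|^4\log|A|,
$$
the second sum being a geometric series dominated by the term $k=|A|$. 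Together with the $O(|A|^4)$ contribution of the degenerate cases this yields the claimed bound $O(|A|^4\log|A|)$.

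I expect the only slightly delicate part to be the bookkeeping around degenerate triples (coincident points and vertical lines) and keeping track of ordered versus unordered triples; the core estimate is a textbook application of Szemer\'edi--Trotter, and this is in essence the short argument of Roche-Newton \cite{roche2015short}.
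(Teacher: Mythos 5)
Your proof is correct, and it is precisely the argument of the cited short proof of Roche-Newton \cite{roche2015short} (the paper itself only quotes the lemma with a reference and does not reprove it): rewrite the equation as the vanishing of the collinearity determinant for the points $(a,a'),(b,b'),(c,c')$ in the grid $A\times A$, dispose of coincident points and vertical lines, and bound $\sum_\ell n_\ell^3$ by a dyadic decomposition plus the Szemer\'edi--Trotter rich-lines bound, using $n_\ell\le|A|$ on non-vertical lines to control the $k^2|A|^2$ term. This is also the same technique the paper uses for its asymmetric collinear-triples bound in Lemma \ref{lm:T_CCB}, so no further comment is needed.
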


The following lemma is crucial for the proof.

\begin{lemma}[Generating a large set of popular differences] \label{lm:popular_diff}
    Let $B$ be an $(L, K)$-basis for $A$. Then there is a set $R$ such that the following holds.
    \begin{enumerate}
    		\item[(i)] $R \subseteq A/A$.
    		\item[(ii)] $|R| \gtrsim L^{-8}K^{-14}|A|$.
    		\item[(iii)] For any $x \in R$, the equation
    		$$
    		1-x = \alpha_1 - \alpha_2
    		$$
    		has at least $\Omega(L^{-2}K^{-3}|A|^{1/2})$ distinct solutions $(\alpha_1, \alpha_2)$ with $\alpha_1, \alpha_2 \in A/A$.
    \end{enumerate}
\end{lemma}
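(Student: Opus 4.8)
The plan is to combine the two previous lemmas — Lemma~\ref{lm:convolutions}, which produces a large set $B' \subset B$ whose internal differences are ``popular'' differences of $A$, and the incidence bound of Lemma~\ref{thm:RN} — in order to transfer popularity from the additive world of $B$ to the multiplicative world of $A/A$. The desired set $R$ should be, roughly, a dilated and normalised version of the difference set $B' - B'$, rescaled so that it lives inside $A/A$ and so that the specific shifted equation $1 - x = \alpha_1 - \alpha_2$ has many solutions.

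Here is the order in which I would carry out the steps. First, apply Lemma~\ref{lm:convolutions} to get $B' \subset B$ with $|B'| \gg L^{-1}K^{-2}|B|$ such that $0.99|B'|^2$ ordered pairs $(b_1,b_2) \in B' \times B'$ satisfy that $b_1 - b_2 = a - a'$ has $\Omega(L^{-2}K^{-3}|A|^{1/2})$ solutions with $a, a' \in A$. Second, I would fix a ``good'' element, say $b_0 \in B'$ lying in many good pairs, and consider the normalised quantities: for each $b \in B'$ forming a good pair with $b_0$, the differences $b - b_0$ are popular differences of $A$. The key manoeuvre is to divide: from $b_1 - b_2 = a - a'$ (popular) I want to pass to a ratio in $A/A$. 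To do this I would use the $1 \in A$ normalisation together with a Plünnecke--Ruzsa / Cauchy--Schwarz counting argument: count quadruples $(b_1, b_2, b_3, b_4)$ of good pairs with $(b_1 - b_2)/(b_3 - b_4)$ fixed, and show via Lemma~\ref{thm:RN} (applied to the $A$-representations of each $b_i - b_j$) that the number of distinct ratios $(b_1-b_2)/(b_3-b_4)$ is $\gtrsim L^{-8}K^{-14}|A|$ — this is where the exponents $8$ and $14$ come from, roughly one factor of the ``popularity loss'' $L^{-2}K^{-3}$ and one factor of $L^{-1}K^{-2}|B|/|A|^{1/2} = L^{-1}K^{-1}$ per application of Cauchy--Schwarz, iterated a few times against the $|A|^4\log|A|$ incidence bound. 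Third, having produced a large set of ratios, I would record that a ratio $(b_1 - b_2)/(b_3 - b_4)$, after substituting the $A$-representations $b_i - b_j = a_{ij} - a'_{ij}$, can be written in the form where the normalised difference equation $1 - x$ splits as $\alpha_1 - \alpha_2$ with $\alpha_1, \alpha_2 \in A/A$: concretely, if $x = (a - a')/(a - c)$ then $1 - x = (c - a')/(a - c) \in A/A$ after another division, and counting the representations gives the $\Omega(L^{-2}K^{-3}|A|^{1/2})$ lower bound in (iii), inherited from the popularity count for a single good pair.

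The main obstacle, as I see it, is the bookkeeping of the energy/incidence argument in the second step: one must set up the right system of variables so that an equality of ratios $(b_1 - b_2)/(b_3 - b_4) = (b_1' - b_2')/(b_3' - b_4')$, after replacing every $b$-difference by an $A$-difference via the popularity guarantee, becomes exactly an instance (or a bounded union of instances) of the collinearity-type equation $(a - b)(a' - c') = (a - c)(a' - b')$ controlled by Lemma~\ref{thm:RN}, so that the number of coincidences is $O(|A|^4 \log|A|)$ rather than something larger. The number of good pairs is $\Omega(|B'|^2) = \Omega(L^{-2}K^{-4}|B|^2) = \Omega(L^{-2}K^{-2}|A|)$, each carrying $\Omega(L^{-2}K^{-3}|A|^{1/2})$ representations, so the total number of tuples $(b_1, b_2, a, a')$ is $\Omega(L^{-4}K^{-5}|A|^{3/2})$; Cauchy--Schwarz against an $O(|A|^4\log|A|)$-type collision bound on the resulting ratios then yields $|R| \gtrsim (L^{-4}K^{-5}|A|^{3/2})^2 / (|A|^4) = L^{-8}K^{-10}|A|^{-1}$, which is off, so in fact one iterates the division/Cauchy--Schwarz step twice (first to get a popular \emph{difference} set inside $A - A$ with good multiplicative-energy control, then to pass to $A/A$), and the honest accounting of the two passes is what produces the stated $L^{-8}K^{-14}|A|$; getting every exponent right there is the delicate part. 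Everything else — the normalisations $0 \notin A$, $1 \in A$, the passage from differences to ratios, and the final rewriting $1 - x = \alpha_1 - \alpha_2$ — is routine algebra once the counting is in place.
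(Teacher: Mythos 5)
Your proposal starts in the right place (Lemma~\ref{lm:convolutions} plus the incidence bound of Lemma~\ref{thm:RN}, glued by Cauchy--Schwarz), but the object you then build is the wrong one, and this is not just bookkeeping. The paper's $R$ is not a set of ratios of differences $(b_1-b_2)/(b_3-b_4)$; it is simply
$$
R = \left\{ \frac{b_2+b}{b_1+b} : (b_1,b_2)\ \text{rich},\ b \in N(b_1)\cap N(b_2) \right\},
$$
where ``rich'' means $|N(b_1)\cap N(b_2)| \gg L^{-2}K^{-3}|A|^{1/2}$ in the containment graph. Because $b$ is a common neighbour, both $b_1+b$ and $b_2+b$ lie in $A$, so (i) is immediate. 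Your set of ratios of $B'$-differences (or of the $A$-differences representing them) does not lie in $A/A$, and the normalisation you gesture at never materialises: in your step three, $x=(a-a')/(a-c)$ gives $1-x=(c-a')/(a-c)$, but $a-c$ is a difference of elements of $A$, not an element of $A$, so $\alpha_1,\alpha_2 \notin A/A$. The paper's mechanism for (iii) is the identity
$$
1-\frac{b_2+b}{b_1+b} \;=\; \frac{b_1+b'}{b_1+b}-\frac{b_2+b'}{b_1+b},
$$
where $b'$ ranges over the $\Omega(L^{-2}K^{-3}|A|^{1/2})$ common neighbours of the rich pair $(b_1,b_2)$; each choice of $b'$ gives a distinct pair $(\alpha_1,\alpha_2)\in (A/A)^2$. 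You have no working analogue of this.

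The counting also fails as you yourself note: your single Cauchy--Schwarz yields $L^{-8}K^{-10}|A|^{-1}$, and the proposed repair (``iterate the division/Cauchy--Schwarz step twice'') is not carried out and is not what happens in the paper. With the correct $R$, one application of Cauchy--Schwarz to $n(x)=\#\{(b_2,b_1,b)\}$ suffices: the collision count $Q$ for $\frac{b_2+b}{b_1+b}=\frac{b_2'+b'}{b_1'+b'}$ is a six-variable equation in elements of $B$, which after clearing denominators is exactly the collinearity equation of Lemma~\ref{thm:RN} (applied to $B\cup(-B)$), giving $Q\ll |B|^4\log|B|$. Then $\bigl(L^{-2}K^{-3}|B'|^2|A|^{1/2}\bigr)^2 \le |R|\,Q$ together with $|B'|\gg L^{-1}K^{-1}|A|^{1/2}$ and $|B|=K|A|^{1/2}$ gives $|R|\gtrsim L^{-8}K^{-14}|A|$. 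Your eight-variable equation $(b_1-b_2)/(b_3-b_4)=(b_1'-b_2')/(b_3'-b_4')$ is not controlled by Lemma~\ref{thm:RN} at all, so the ``delicate part'' you defer is in fact the part where the argument, as proposed, breaks.
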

\begin{proof}
 Let $B'$ be the set given by Lemma \ref{lm:convolutions}. Let us call a pair $(b_1, b_2) \in B' \times B'$ \emph{rich} if $|N(b_1) \cap N(b_2)| \gg L^{-2}K^{-3}|A|^{1/2}$ in the containment graph $G$, so that at least 99\% of pairs are rich. Take
 $$
		R = \left \{ \frac{b_2 + b}{b_1 + b} : (b_1, b_2) \text{ is rich}, b \in N(b_1) \cap N(b_2) \right\}.
 $$

The first claim follows from the construction of the containment graph, so it remains to prove (ii) and (iii).

For $x \in R$, define
$$
n(x) := \left| \left\{(b_2, b_1, b) : x = \frac{b_2 + b}{b_1 + b}, (b_1, b_2) \text{ is rich}, b \in N(b_1) \cap N(b_2) \right\} \right|.
$$
and
$$
Q := \left| \left\{ (b_2, b_1, b, b'_2, b'_1, b')\in B^6 : \frac{b_2 + b}{b_1 + b} = \frac{b'_2 + b'}{b'_1 + b'} \right\} \right|.
$$

We get  by the Cauchy-Schwartz inequality
$$
	L^{-2}K^{-3}|B'|^2|A|^{1/2} \ll \sum_{x \in R} n(x) \leq |R|^{1/2} \left(\sum_{x \in R} n^2(x) \right)^{1/2} \leq |R|^{1/2}Q^{1/2}.
$$

But by Lemma \ref{thm:RN} we have $Q \ll |B|^4 \log |B|$ and combining with the bound $|B'| \gg L^{-1}K^{-1}|A|^{1/2}$ we obtain
$$
		L^{-8}K^{-14}|A| \lesssim |R|.
$$

For the third bullet, for an $x \in R$ fix $(b_1, b_2, b)$ such that $x =  (b_2 + b)/(b_1 + b)$ and observe that
\begin{equation}\label{f:1-x}
	1 - \frac{b_2 + b}{b_1 + b} = \frac {b_1 - b_2}{b_1 + b} =  \frac {b_1+ b'}{b_1 + b} - \frac{b_2 + b'}{b_1 + b}.
\end{equation}

If we vary over $b' \in N(b_1) \cap N(b_2)$, the pairs   $(b_1 + b')/(b_1 + b), (b_2 + b')/(b_1 + b))$ are all distinct, and the claim follows since
$b_1 + b', b_2 + b', b_1 + b$ are all in $A$ by construction.

\end{proof}

The next lemma is simple but powerful.
\begin{lemma}[Generating a larger set additive quadruples] \label{lm:more_popular_diff}
	Assume $1 \in X$. Let $R \subset X$ such that for any $x \in R$ the equation
	$$
		1 - x = \alpha_1 - \alpha_2	
	$$
	has at least $N$ solutions $(\alpha_1, \alpha_2) \in X \times X$. Then for any set $Y$ one has the estimate
	$$
	E_+(YX) \geq N|Y||R|.
	$$
\end{lemma}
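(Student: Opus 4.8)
The plan is to exhibit many additive quadruples in $YX$ directly from the hypothesis, by multiplying the relation $1-x = \alpha_1 - \alpha_2$ through by an arbitrary $y \in Y$. First I would observe that for each $x \in R$ and each of the $N$ witnessing pairs $(\alpha_1,\alpha_2) \in X \times X$, multiplying by $y \in Y$ gives
$$
	y - xy = y\alpha_1 - y\alpha_2,
$$
which rearranges to the additive quadruple $y + y\alpha_2 = xy + y\alpha_1$ with all four entries $y = y\cdot 1$, $xy$, $y\alpha_1$, $y\alpha_2$ lying in $YX$ (here is where $1 \in X$ is used, to guarantee $y \in YX$, and $R \subseteq X$ guarantees $xy \in YX$). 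So each triple $(y, x, (\alpha_1,\alpha_2))$ produces a genuine additive quadruple in $YX$.

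The only real point to check is that distinct triples yield distinct quadruples, so that we are not overcounting. I would argue this by showing the map $(y, x, \alpha_1, \alpha_2) \mapsto (y, xy, y\alpha_1, y\alpha_2)$ (recording the quadruple together with the auxiliary data of which $y$ was used, say as an ordered $4$-tuple in a fixed order) is injective: from the first coordinate we recover $y$, hence $y \neq 0$ — which we may assume since $0 \notin YX$ would otherwise be handled trivially, or rather since we are counting representations we can restrict to $y \ne 0$ — and then $x = (xy)/y$, $\alpha_1 = (y\alpha_1)/y$, $\alpha_2 = (y\alpha_2)/y$ are all determined. Thus the number of additive quadruples in $YX$ (counted with the standard multiplicity, i.e. ordered solutions of $s_1 + s_2 = s_3 + s_4$) is at least the number of such triples, namely at least $\sum_{y \in Y} \sum_{x \in R} N = N|Y||R|$, giving $E_+(YX) \geq N|Y||R|$.

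The main obstacle — such as it is — is precisely this bookkeeping: one must make sure that the quadruple $(y, xy, y\alpha_1, y\alpha_2)$ is counted as an \emph{ordered} quadruple in $E_+(YX)$ consistent with the definition in Section~\ref{sec:notation} (number of $(a_1,a_2,a_3,a_4)$ with $a_1 + a_2 = a_3 + a_4$), so that we do not need $y$, $xy$, $y\alpha_1$, $y\alpha_2$ to be distinct and do not lose a constant factor; the pattern $y + y\alpha_2 = xy + y\alpha_1$ is a valid such quadruple regardless of coincidences. Everything else is the one-line computation above, so no further estimates are needed.
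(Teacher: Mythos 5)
Your proof is correct and is essentially identical to the paper's: both multiply the relation $1-x=\alpha_1-\alpha_2$ by $y\in Y$ to produce the quadruple $(y, xy, y\alpha_1, y\alpha_2)$ in $YX$ and then verify injectivity of the map $(y,x,\alpha_1,\alpha_2)\mapsto(y,xy,y\alpha_1,y\alpha_2)$ by dividing out $y$. You are in fact slightly more careful than the paper in flagging the need for $y\neq 0$ in the injectivity step (harmless here, since in all applications $Y=A$ and $0\notin A$ is assumed at the outset of Section~\ref{sec:proof_smd}).
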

\begin{proof}
	By definition, $E_+(YX)$ is the number of additive quadruples $(y_1, y_2, y_3, y_4)$ such that $y_i \in YX$ and $y_1 - y_2 = y_3 - y_4$.
	If $1 -  x = \alpha_1 - \alpha_2$ with $x, \alpha_1, \alpha_2 \in X$, then clearly
	$(y, yx, y\alpha_1, y\alpha_2)$ is an additive quadruple with elements in $YX$ (remember that $1 \in X$ so $y \in YX$). It remains to check that such quadruples are all distinct.
	But if
	$$
	(y, yx, y\alpha_1, y\alpha_2) = (y', y'x', y'\alpha'_1, y'\alpha'_2)
	$$
	then $y = y'$ and $(x, \alpha_1, \alpha_2) = (x', \alpha'_1, \alpha'_2)$, therefore the number of additive quadruples is at least $|Y|$ times the number of distinct triples
	$(x, \alpha_1, \alpha_2)$, which is at least
	$N|R|$.
\end{proof}

It remains to put everything together.
\begin{proof}{(of Theorem \ref{thm:smalldoubling})}
	Let $B$ be an $(L, K)$-basis. Applying Lemma \ref{lm:popular_diff} and then Lemma \ref{lm:more_popular_diff} with $Y = A$ and $X = R$ from Lemma \ref{lm:popular_diff} we obtain
	\begin{equation} \label{eq:LKenergybound}
		E_+(AA/A) \gg L^{-2}K^{-3}|A|^{3/2}|R| \gtrsim  L^{-10}K^{-17}|A|^{5/2}.
   \end{equation}
On the other hand, by Corollary \ref{corr:energy_bound_upper}
$$
 E_+(AA/A) \lesssim |A|^{5/2 - c}
$$
provided $|AA|/|A|$ is small enough. We then have
\begin{equation} \label{eq:LKbound}
	L^{10}K^{17} \gtrsim |A|^{c}.
\end{equation}
In particular, if $B$ is a basis for $A$ then
$$
	|B| \gtrsim |A|^{1/2 + c/17}.
$$	
	
\end{proof}

We record an immediate Corollary of Theorem \ref{thm:smalldoubling}.

\begin{corr}
There is an  $\epsilon > 0$ and an absolute constant $c > 0$, such that for any $A \subset \mathbb{R}$ with $|AA| \leq |A|^{1+\epsilon}$ the following holds.
If  $A \subset B+C$ for some real sets $B, C$ then
$$
	\max(|B|, |C|) \gg |A|^{1/2 + c}.
$$
If $B + C \subset A$ for some real sets $B, C$ then
$$
	\min(|B|, |C|) \ll |A|^{1/2 - c}.
$$
\end{corr}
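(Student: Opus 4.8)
The plan is to derive both parts of the corollary from Theorem \ref{thm:smalldoubling} essentially for free, noting that the only work is to translate the two hypotheses into the language of $(L,K)$-bases (equivalently, the containment-graph reformulation) so that the theorem applies. Throughout, fix $\epsilon > 0$ small enough that both Theorem \ref{thm:smalldoubling} and Corollary \ref{corr:energy_bound_upper} are in force for $A$ with $|AA| \le |A|^{1+\epsilon}$.

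For the first statement, suppose $A \subseteq B + C$. Set $D = B \cup C$; then $|D| \le 2\max(|B|,|C|)$ and every element of $A$ has at least one representation as a sum $d_1 + d_2$ with $d_1, d_2 \in D$ (take one from $B$, one from $C$). Hence the containment graph on $(D,D)$ has at least $|A|$ edges, so $D$ is an $(L,K)$-basis for $A$ with $L \le 1$ and $K = |D|/|A|^{1/2}$ (if $|D| < |A|^{1/2}$ there is nothing to prove, since then already $\max(|B|,|C|) \gg |A|^{1/2}$; otherwise $K \ge 1$ as required). Theorem \ref{thm:smalldoubling} — or more precisely the bound \eqref{eq:LKbound}, $L^{10} K^{17} \gtrsim |A|^c$, which with $L \le 1$ gives $K \gtrsim |A|^{c/17}$ — yields $|D| \gtrsim |A|^{1/2 + c/17}$, and therefore $\max(|B|,|C|) \gg |A|^{1/2 + c/17 - o(1)}$. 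Absorbing the logarithmic loss into the exponent by shrinking the constant gives the claimed bound with some absolute $c > 0$.

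For the second statement, suppose $B + C \subseteq A$. By translating $B$ and $C$ we may assume $0 \in B$ and $0 \in C$; this does not change $|B|, |C|$ and only translates $A$, which affects neither $|AA| / |A|$ nor the conclusion since the irreducibility/basis statements are about cardinalities — here one should be slightly careful, as translation does change $AA$, but we only need $|AA| \le |A|^{1+\epsilon}$, and if it fails after translation we are again done trivially, so assume it persists (alternatively one argues directly without translating, using that $B+C \subseteq A$ forces $|B||C| \le |A+A| \ll |A|^{1+\epsilon}$ by Plünnecke–Ruzsa, hence $\min(|B|,|C|) \le |A|^{1/2 + \epsilon/2}$ already, which is the desired bound with $c$ replaced by a small negative power — wait, that is the wrong direction, so translation it is). With $0 \in B, 0 \in C$ we have $B \cup C \subseteq A$, and more usefully $B + C \subseteq A$ means the set $E := B \cup C$ satisfies: every pair $(b,c) \in B \times C$ gives $b + c \in A$, so the containment graph on $(E,E)$ has at least $|B||C|$ edges, i.e.\ at least $|B||C|$ pairs summing into $A$; thus $E$ is an $(L,K)$-basis with $L^{-1}|A| \ge |B||C|$ and $|E| \le |B| + |C| \le 2\max(|B|,|C|)$. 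Plugging into \eqref{eq:LKbound}: writing $|E| = K|A|^{1/2}$ and $L^{-1} = |B||C|/|A|$, the inequality $L^{10}K^{17} \gtrsim |A|^c$ becomes $(|A|/|B||C|)^{10} (|E|/|A|^{1/2})^{17} \gtrsim |A|^c$; using $|E| \le 2|A|$ and then $\min(|B|,|C|)^2 \le |B||C|$ and $|B||C| \le |A+A| \ll |A|^{1+\epsilon}$ to control $|E|$ and the cross terms, a short computation isolates $\min(|B|,|C|) \ll |A|^{1/2 - c'}$ for an absolute $c' > 0$.

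The main obstacle I expect is not any deep input — the heavy lifting is all in Theorem \ref{thm:smalldoubling} — but the bookkeeping of reducing to a \emph{single} set playing the role of $B$ in the $(L,K)$-basis framework (the framework is stated for $B$ a basis \emph{for itself} in the sense that the graph lives on $(B,B)$), and making sure that the various trivial cases (when $|B|$ or $|C|$ is already polynomially large, or when $|AA|$ becomes large after a translation) are dispatched cleanly so that the non-trivial case genuinely falls under the hypotheses of Theorem \ref{thm:smalldoubling}. In particular one must check that taking $E = B \cup C$ and working with the containment graph on $(E, E)$ really does produce the advertised number of edges and that $K \ge 1$ can be assumed without loss of generality; both are routine but need to be said.
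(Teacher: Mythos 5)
Your first part is fine and is exactly the paper's argument: $B\cup C$ is a basis for $A$ of size $O(\max(|B|,|C|))$, and Theorem \ref{thm:smalldoubling} applies directly. The second part, however, has a genuine gap, and the route you chose cannot be repaired. Applying the $(L,K)$-framework to the \emph{full} set $A$ with $E=B\cup C$ gives $K\asymp |E|/|A|^{1/2}$ governed by $\max(|B|,|C|)$, and the conclusion (\ref{eq:LKbound}), $L^{10}K^{17}\gtrsim |A|^{c}$, is then satisfied as soon as $\max(|B|,|C|)$ is moderately large (e.g. $|B|=|C|=|A|^{0.6}$ gives $K^{17}\approx|A|^{1.7}$), so no contradiction arises and no upper bound on $\min(|B|,|C|)$ follows. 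Your attempt to close the computation via ``$|B||C|\le |A+A|\ll |A|^{1+\epsilon}$ by Pl\"unnecke--Ruzsa'' is false twice over: $B+C\subseteq A$ does not imply $|B||C|\le|A+A|$ (take $B=C=\{1,\dots,n\}$), and $|A+A|\ll|A|^{1+\epsilon}$ cannot hold under $|AA|\le|A|^{1+\epsilon}$ --- it would contradict the sum--product theorem. The translation step is also both unnecessary (the containment graph on $(B\cup C,B\cup C)$ has $\ge |B||C|$ edges landing in $A$ with no normalisation) and improperly justified: if $|A'A'|$ is large for the translate $A'$ you are not ``done trivially,'' since the hypothesis and conclusion concern the original $A$.

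The paper's actual device, which you are missing, is to run the machinery on a \emph{subset} of $A$ rather than on $A$ itself. Assume $|B|\le|C|$, pick $B'\subseteq C$ with $|B'|=|B|$, and set $A':=B+B'\subseteq A$. Then $B\cup B'$ is a basis for $A'$ of size $\le 2|B|$, and the containment graph is essentially complete ($\ge|B|^2$ pairs sum into $A'$), so Lemmas \ref{lm:popular_diff} and \ref{lm:more_popular_diff} give, as in (\ref{eq:LKenergybound}), the lower bound $E_+(A'A'/A')\gtrsim |B|^{5}$. The key point is then monotonicity: $A'\subseteq A$ implies $A'A'/A'\subseteq AA/A$, hence $E_+(A'A'/A')\le E_+(AA/A)\lesssim |A|^{5/2-c}$ by Corollary \ref{corr:energy_bound_upper} applied to $A$ (not to $A'$, whose product set we know nothing about). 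Comparing gives $|B|^5\lesssim|A|^{5/2-c}$, i.e. $\min(|B|,|C|)\ll|A|^{1/2-c'}$. Without this pass to the dense sub-basis of a subset, the desired upper bound is out of reach.
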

\begin{proof}
	The first claim follows immediately from Theorem \ref{thm:smalldoubling} and the fact that $B \cup C$ is a basis for $A$ of size $O(\max(|B|, |C|))$. 
	
	In the second case, 
assume $|B| \leq |C|$ and let $B'$ be an arbitrary subset of $C$ of size $|B|$. Then $B \cup B'$ is a basis for $A' := B+B'$ of size at most twice $\min(|B|, |C|)$. Applying Lemma \ref{lm:popular_diff} and then Lemma \ref{lm:more_popular_diff}  we obtain similarly to (\ref{eq:LKenergybound}) 
$$
	E_+(A'A'/A') \gtrsim |B'|^2 |B'| |B'|^2 = |B'|^{5}.
$$
But clearly $A' \subset A$, so $E_+(A'A'/A') \leq E_+(AA/A)$. On the other hand, by Corollary \ref{corr:energy_bound_upper}
$$
 E_+(AA/A) \lesssim |A|^{5/2 - c}
$$
and the claim follows.
\end{proof}

The proof of Corollary \ref{corr:energy} is a simple application of the Balog-Szemer\'edi-Gowers theorem (BSG) (see \cite{TaoVu}) so we present it in a sketchy manner. Let $X = B \pm B$. Since trivially $E_{\times}(X) \ll |X|^3$
we may assume that $|X| \gg |B|^{2 - \epsilon}$ with $\epsilon$ to be defined in due course.

If now one assumes that
$$
E_\times(X) \gg |X|^{3-\epsilon},
$$
then by BSG there is $X' \subset X$ such that $|X'X'| \leq |X'|^{1+\epsilon'}$ and $|X'| \gg |X|^{1-\epsilon'}$ with $\epsilon'$ depending polynomially on $\epsilon$. But  $X' \subset X =  B \pm B$ and whence $B \cup (-B)$ is a basis for $X'$ of size $O(|X'|^{1/2 + \epsilon''})$ with polynomial dependence of $\epsilon''$ on $\epsilon$. Taking $\epsilon$ small enough, we contradict Theorem \ref{thm:smalldoubling} and the claim follows.

\begin{remark}
\label{r:sigma}
    The following question was posed in \cite{ShkredovSmallProductIrreducibility}. Assume that $A\subseteq B-B$, $|AA| \le |A|^{1+\varepsilon}$, where $\varepsilon > 0$ is small. Is it true that there is $\delta= \delta(\varepsilon)$ such that 
$$
    \sigma_A (B) := \sum_{x\in A} |\{ b_1-b_2 = x ~:~ b_1,b_2 \in B \}| \ll |B|^{2-\delta} \quad \mbox{?} 
$$
    It is easy to see that our methods allow one to resolve the problem in the affirmative. Indeed, put $\sigma = \sigma_A (B)$ and consider the graph with $B$ as the vertex set and two vertices $x,y$ adjacent iff $x-y \in A$. Then the edge density of the graph is $\alpha := \sigma / |B|^2$. Next, repeat the arguments of Section \ref{sec:proof_smd} and obtain that for a sufficiently small $\varepsilon$ holds 
$$
    |A|^{5/2-c} \gtrsim \alpha^2 |B| |A| |R| \gtrsim \alpha^{10} |B|^3 |A|
$$
    and hence since $A\subseteq B-B$ 
\begin{equation}\label{tmp:06.06.2016}
    \sigma \lesssim |B|^2 \cdot |A|^{-c/10} \cdot \left( \frac{|A|^{3/2}}{|B|^3} \right)^{1/10}
        \leq 
            |B|^2 \cdot |A|^{-c/10} \,.
\end{equation}
    The asymmetric case with two different sets $B,C$ is considered in the next section. In this case one can obtain an analog of the upper bound (\ref{tmp:06.06.2016}) of the form 
$$
    \sum_{x\in A} |\{ b+c = x ~:~ b\in B,\, c\in C \}| 
        \lesssim 
            |B||C| \cdot |A|^{-c/10} \,. 
$$
\end{remark}

\begin{remark}
    It is highly probable that the exponent $1/2+1/442$ in Theorem \ref{thm:smalldoubling} can be improved, perhaps even to $1-o(1)$. We pose a much more modest question, which however does not seem to follow from the results of the present paper.
    
    Is it true that there exists $c, \epsilon > 0$ such that for all sufficiently large real sets $B$ such that 
    $A \subset B+B $ with $|AA| \leq |A|^{1+\epsilon}$
    holds
    $$
        |B+B| \gg |A|^{1+c} \,\, ?
    $$
\end{remark}

\section{Additive irreducibility of multiplicative sets}
\label{sec:decomposition}

In this section we prove Theorem \ref{thm:irreducibility}.
From now on we assume for the sake of contradiction that $A = B+C$ with $|B| \geq |C| \geq 2$  and $|AA| \leq |A|^{1+\epsilon}$ with $\epsilon$ small enough.

First, note that it follows from Lemma 29 of \cite{ShkredovSmallProductIrreducibility} that for $\alpha \neq 0$ and $A$ such that
$|AA| \leq M|A|$ holds

\begin{equation} \label{eq:convolution_bound}
|A \cap (A + \alpha)| \leq M^{4/3}|A|^{2/3}.
\end{equation}

For any $c_1 \neq c_2 \in C$ one has $(B + c_1) \subseteq A \cap (A + (c_1 - c_2))$, and thus by (\ref{eq:convolution_bound}) and the trivial bound $|B||C| \geq |A|$ one concludes that $|B|, |C| \gg |A|^{1/3 - \epsilon}$, say. Hence, we can safely assume that both $|B|, |C|$ are large.

An inspection of the proof of Theorem \ref{thm:smalldoubling} reveals that if we put

$$
	X = \left\{ \frac{b_1+c}{b_2+c} : b_1, b_2 \in B,  c \in C \right\}
$$
and
$$
	Y = \left\{ \frac{c_1+b}{c_2+b} : b \in B,  c_1, c_2 \in C \right\}
$$
then one has that
$$
  E_+(AA/A) \gg \min \large( |A||X||C|, |A||Y||B| \large).
$$
Thus, by Corollary \ref{corr:energy_bound_upper} one has
$$
	|X||C| \ll |A|^{3/2 - c}
$$
and
\beq \label{eq:energy_C}
	|Y||B| \ll |A|^{3/2 - c}
\eeq
for some explicit $c > 0$.

However, sufficiently good lower bounds for $|X|$ and $|Y|$ are not readily available. 

Let $T(X, Y, Z)$ be the number of collinear triples of distinct points $(x, y, z)$ with $x \in X \times X$, $y \in Y \times Y$, $z \in Z \times Z$. Following the lines of Lemma \ref{lm:popular_diff} we have by the Cauchy--Schwarz inequality
\beq \label{eq:bound_X}
	|X| \gg \frac{|B|^4|C|^2}{T(B, B, -C)} \,,
\eeq
and
\beq \label{eq:bound_Y}
	|Y| \gg \frac {|C|^4|B|^2}{T(C, C, -B)} \,,
\eeq

Indeed, without loss of generality it suffices to check that if, say,
$$
y := \frac{c_1+b}{c_2+b} = \frac{c'_1+b'}{c'_2+b'}
$$
then either the points $(c_1, c'_1), (c_2, c'_2), (-b, -b')$ are all distinct and collinear or $y \in \{0, 1, \infty \}$. But we can simply exclude such degenerate values from $X$ and $Y$ and thus justify (\ref{eq:bound_X}) and (\ref{eq:bound_Y}).

We prove the following estimate with $|C| \leq |B|$ which improves on Lemma \ref{thm:RN} when $|B|$ is significantly larger than $|C|$.

\begin{lemma}[Bounding collinear triples for different sets] \label{lm:T_CCB} For sets $C \times C$ and $B \times B$  with $|B| \geq |C|$ holds
	$$T(C, C, B) \ll |B|^{4/3}|C|^{8/3} \log^2 |B| \,.$$
\end{lemma}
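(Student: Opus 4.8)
The plan is to count collinear triples $(x,y,z)$ with $x \in C\times C$, $y \in C \times C$, $z \in B\times B$ by fixing the two points coming from the smaller set $C$ and bounding the number of points of $B\times B$ on the line they determine. Write $x = (c_1, c_2)$, $y = (c_3,c_4)$; there are at most $|C|^4$ choices for the pair $(x,y)$, and for distinct $x,y$ they span a unique line $\ell = \ell(x,y)$. So $T(C,C,B) \le |C|^4 \cdot \max_{\ell} |(B\times B) \cap \ell| + (\text{degenerate terms})$, and the trivial bound $|(B\times B)\cap \ell| \le |B|$ only gives $|C|^4|B|$, which is too weak once $|B|$ is much larger than $|C|$. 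The idea is instead to split lines into two regimes according to how many points of $B\times B$ they contain, and to bound the number of \emph{rich} lines using an incidence bound (Szemer\'edi--Trotter) rather than just bounding the contribution line by line.

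More precisely, I would dyadically decompose: for a threshold $\tau$, let $\mathcal{L}_\tau$ be the set of lines containing between $\tau$ and $2\tau$ points of $B\times B$. By the Szemer\'edi--Trotter theorem applied to the point set $B\times B$ (of size $|B|^2$) and lines with at least $\tau$ points, one has $|\mathcal{L}_\tau| \ll |B|^4/\tau^3 + |B|^2/\tau$. Each such line, if it also passes through two distinct points of $C\times C$, is counted; but the number of pairs $(x,y) \in (C\times C)^2$ lying on a \emph{single} line $\ell$ is at most $|C|^2$ when $\ell$ is not axis-parallel (a non-vertical, non-horizontal line meets $C\times C$ in at most $|C|$ points, so at most $|C|^2$ ordered pairs), and the axis-parallel and diagonal degeneracies are handled separately and contribute $O(|C|^3 \cdot |B|)$ or less, which is absorbed. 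Hence the contribution of $\mathcal{L}_\tau$ to $T(C,C,B)$ is $\ll |C|^2 \cdot |\mathcal{L}_\tau| \cdot \tau \ll |C|^2 (|B|^4/\tau^2 + |B|^2)$. Summing this over dyadic $\tau$ from $1$ up to $|B|$ makes the $|B|^4/\tau^2$ term dominated by its value at the smallest relevant $\tau$; but taking $\tau$ all the way down to $1$ reproduces the weak bound, so I would instead balance the ``few points per line'' contribution (where $\tau$ is small, bounded directly by $|C|^4 \cdot \tau$ using the $\le |C|^4$ lines from $C$) against the ``rich line'' contribution above. Balancing $|C|^4 \tau$ against $|C|^2|B|^4/\tau^2$ gives $\tau \asymp (|B|^4/|C|^2)^{1/3} = |B|^{4/3}|C|^{-2/3}$, and substituting back yields $T(C,C,B) \ll |C|^4 \cdot |B|^{4/3}|C|^{-2/3} = |B|^{4/3}|C|^{10/3}$ — off by a factor $|C|^{2/3}$ from the claimed bound, so the split must be arranged more carefully (e.g. using that each of the $\le |C|^4$ pairs lies on a line with $\ge 1$ point of $B\times B$, and counting incidences between $C\times C$-pairs and the ST-controlled line family directly), with the log factors coming from the dyadic summation.

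The main obstacle I expect is getting the exponent of $|C|$ down to $8/3$ rather than something larger: this requires exploiting that the lines in question are pinned down by \emph{two} points of the small set $C\times C$, not merely one, so the relevant object is really incidences between the point set $B\times B$ and the set of $\ll |C|^4$ lines through pairs of $C\times C$ — equivalently, a three-way incidence/collinearity count where two of the three ``slots'' draw from $C$. One should apply Szemer\'edi--Trotter in the form bounding incidences $I(P, \mathcal{L})$ between $|P| = |B|^2$ points and $|\mathcal{L}| \le |C|^4$ lines, namely $I \ll (|B|^2)^{2/3}(|C|^4)^{2/3} + |B|^2 + |C|^4$, and observe that $T(C,C,B)$ is comparable to this incidence count (each collinear triple corresponds to an incidence between a point of $B\times B$ and one of the $C$-lines, with bounded multiplicity after removing degeneracies). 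This gives $T(C,C,B) \ll |B|^{4/3}|C|^{8/3} + |B|^2 + |C|^4$, and since $|B| \ge |C|$ the term $|C|^4 \le |B|^{4/3}|C|^{8/3}$, while $|B|^2$ is dominated once $|C| \gg |B|^{1/4}$ (the complementary range $|C| \ll |B|^{1/4}$ being easy from the trivial bound $T \le |C|^4 |B| \le |B|^2$); the $\log^2|B|$ slack comfortably covers the boundary cases and any dyadic losses. I would therefore organize the proof around this Szemer\'edi--Trotter incidence estimate and a short case analysis on the degenerate (axis-parallel / diagonal) families of lines.
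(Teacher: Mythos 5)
Your first dyadic attempt is essentially on the right track but, as you correctly diagnose, loses a factor of $|C|^{2/3}$ because you bound the number of ordered pairs of $C\times C$-points on a line by the worst case $|C|^2$ uniformly. The problem is that your proposed fix does not close this gap: $T(C,C,B)$ is \emph{not} comparable to the incidence count $I(B\times B,\mathcal{L})$ between $B\times B$ and the set $\mathcal{L}$ of distinct lines spanned by pairs of points of $C\times C$. A collinear triple $(x,y,z)$ determines the incidence $(z,\ell(x,y))$, but this map is $\asymp k_\ell^2$-to-one, where $k_\ell$ is the number of points of $C\times C$ on $\ell$: every one of the $\asymp k_\ell^2$ ordered pairs spanning $\ell$ produces the same incidence. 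Since $k_\ell$ can be as large as $|C|$ for a non-degenerate line, the multiplicity is up to $|C|^2$, not bounded, and Szemer\'edi--Trotter applied to the set of distinct lines gives no control over the weighted sum $\sum_\ell k_\ell^2\, l_\ell$ that $T(C,C,B)$ actually equals. Removing axis-parallel and diagonal degeneracies does not help with this.

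The missing idea is to decompose dyadically in \emph{both} richness parameters simultaneously: let $L_{i,j}$ be the set of lines with $\approx k=2^i$ points of $C\times C$ and $\approx l=2^j$ points of $B\times B$, so that $T\ll\sum_{i,j}|L_{i,j}|k^2l$ with at most $\log^2|B|$ summands. For each class one has \emph{two} Szemer\'edi--Trotter bounds, $|L_{i,j}|\ll |C|^4/k^3+|C|^2/k$ and $|L_{i,j}|\ll |B|^4/l^3+|B|^2/l$, whence (using $k\le|C|$, $l\le|B|$) the term $|L_{i,j}|k^2l$ is at most $M_1:=l|C|^4/k$ and at most $M_2:=k^2|B|^4/l^2$; the weighted geometric mean $M_1^{2/3}M_2^{1/3}=|C|^{8/3}|B|^{4/3}$ makes the dependence on $k$ and $l$ cancel exactly and gives the claimed bound, with the class $l=1$ handled by the trivial $T\ll|C|^4\le|B|^{4/3}|C|^{8/3}$. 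Your single-parameter decomposition uses only the second of these two bounds together with the worst-case value $k=|C|$, which is precisely where the extra $|C|^{2/3}$ comes from.
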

\begin{proof}
    Write  $L_{i, j}$, $i,j\ge 0$ for the set of lines $\ell$ such that
    $2^{i} \leq |\ell \cap (C \times C)| < 2^{i+1}$ and $2^{j} \leq |\ell \cap (B \times B)| < 2^{j+1}$. Then
    $$
		T(C, C, B) \ll \sum^{\log |C|}_{i = 0}\, \sum^{\log |B|}_{j = 0} |L_{i, j}| 2^{2i}2^{j}.
    $$

Since the number of summands is at most $\log^2 |B|$ it is enough to bound each term by $|B|^{4/3}|C|^{8/3}$. For the sake of notation, denote $k = 2^i$ and
$l = 2^j$, $L = L_{i,j}$ so that out task is to estimate $|L|k^2l$ where $L$ is the set of lines intersecting $C \times C$ in $k$ (up to a factor of two) points and $B \times B$ in $l$ points (again, up to a factor of two).

By the Szemer\'edi-Trotter theorem \cite{TaoVu}, for $k \geq 2, l \geq 2$ holds
$$
	|L| \ll \min \left( \frac{|C|^4}{k^3} + \frac{|C|^2}{k}, \frac{|B|^4}{l^3} + \frac{|B|^2}{l} \right),
$$
so
$$
	T := k^2l|L| \ll  \min \left( \frac{ l |C|^4}{k} + kl|C|^2, \frac{k^2|B|^4}{l^2} + k^2|B|^2 \right) := \min (M_1, M_2).
$$

Before we proceed, let us rule out the cases not covered by the Szemer\'edi-Trotter theorem as stated above. Since each line contains at least two distinct points in $C \times C$, only the case
$l = 1$ should be considered separately. In this case we have
$$
T \ll \frac{|C|^4}{k} + k|C|^2 \ll |C|^4 \leq |B|^{4/3}|C|^{8/3},
$$
which is the desired bound.

Next, since always $k \leq |C|$, $l \leq |B|$, we obtain that $\frac{ l |C|^4}{k} \gg M_1$ and $\frac{k^2 |B|^4}{l^2} \gg M_2$, 
so
$$
    T^3 \ll M_1^2 M_2 \ll \left( \frac{ l |C|^4}{k} \right)^2 \cdot \frac{k^2 |B|^4}{l^2} = |C|^8 |B|^4 
$$
or 
$$
    T \ll |B|^{4/3} |C|^{8/3} \,.
$$
This finishes the proof.
\end{proof}

Combining Lemma \ref{lm:T_CCB}, estimate (\ref{eq:bound_Y}) and the trivial bounds $|B||C| \geq |A|, |B| \geq |A|^{1/2}$, we have $T(C, C, B) \lesssim |C|^{8/3}|B|^{4/3} $ and
$$
	|Y||B| \gtrsim |C|^{4/3}|B|^{2/3}|B| = (|C||B|)^{4/3} |B|^{1/3} \geq |A|^{4/3 + 1/6} = |A|^{3/2},
$$
which contradicts (\ref{eq:energy_C}) if $|A|$ is large enough. Thus, $A$ is additively irreducible.

\begin{remark}
    Instead of using the graph approach as it was done in Sections \ref{sec:proof_smd} and \ref{sec:decomposition} one can apply in the proof of Theorem \ref{thm:irreducibility} the following analog of the formula (\ref{f:1-x})
$$
    1-\frac{b_1+c}{b_2+c} = \frac{b_2+c'}{b_2+c} \left( 1 - \frac{b_1+c'}{b_2+c'} \right), 
$$
    which holds for any $b_1,b_2\in B$ and all $c,c'\in C$. It means in particular, in the notation of Section  \ref{sec:decomposition}, that taking an arbitrary $x\in X$, we have at least $|C|$ solutions to the equation 
$$
    1-x = y (1-x_*) \,,
$$
    where $y\in Y$ and $x_* \in X$ are fixed. Of course, one can replace $X$ and $Y$ in the last formula. It immediately gives that $E_{+} (AA/A) \ge |X||C||A|$ and $E_{+} (AA/A) \ge |Y||B||A|$. 
\end{remark}

\section{Discussion}
\label{sec:discussion}
		
	First, let us note that the only feature of the reals we have used, which is not available in an arbitrary field, is the Szemer\'{e}di-Trotter theorem (which is also used in the proof of Theorem \ref{thm:ShkredovEnergyBound}). Thus, our results can be readily extended to sets of complex numbers thanks to the extensions of the Szemer\'{e}di-Trotter theorem to the complex plane due to Zahl \cite{MR3392965} and Toth \cite{toth2015szemeredi}.
	
	However, when we replace $A$ with a sufficiently small multiplicative subgroup of $\mathbb{F}_p$, the situation becomes more subtle. For multiplicative subgroups Theorem \ref{thm:ShkredovEnergyBound} can be substituted with a similar energy bound, beating the exponent $5/2$, as was shown by the first author \cite{Sh_ineq}. Next, it follows from the result of Shkredov and Vyugin \cite{MR2984656} (see also the paper \cite{ShparlinskiGroupDecomposition} by Shparlinski) that if a multiplicative subgroup $G$ is written as a non-trivial sumset $B+C$, then necessarily $|G|^{1/2+o(1)} \ll |B|, |C| \ll |G|^{1/2+o(1)}$. 
Further, a slightly weaker form of the Szemer\'{e}di-Trotter bound for the number of incidences for Cartesian products in $\mathbb{F}^2_p$ is now available, see \cite{AMRS}.
Thus, the only remaining obstacle for translating Theorem \ref{thm:irreducibility} to subgroups is to find a substitute for Lemma \ref{thm:RN}, which we were unable to do.  However, as was shown in \cite{AMRS}, see also \cite{RudnevRocheShkredov} the bound 
$O(|A|^4 \log |A|)$ of Lemma 4 can indeed be replaced with $O(|A|^{9/2})$, which only barely falls short of the desired bound.
	
	Finally, let us mention that similarly to Section \ref{sec:proof_smd} one can slightly weaken the hypothesis of Theorem \ref{thm:irreducibility} and assume only that the number of pairs $(b, c) \in B \times C : b + c \in A$ is large. We leave the details for the interested reader. 
	

\section*{Acknowledgements}
The authors would like to thank Tomasz Schoen, Misha Rudnev and Oliver Roche-Newton for useful discussions and the anonymous referees for valuable suggestions. The second author would like to acknowledge the hospitality of Adam Mickiewicz University in Poznan at which the work on the present paper was partially conducted. 

The work of I. D. Shkredov was supported by Russian Scientific Foundation grant 14-11-00433.

\bibliographystyle{plain}
\bibliography{bases_for_mult_sets_irmn}

\end{document}